\documentclass[10pt]{amsart}
\usepackage{mathrsfs}
\usepackage{amssymb,amsmath,amsfonts}
\usepackage[all,poly]{xy}
\usepackage{multirow}

\setlength{\headheight}{8pt} \setlength{\textheight}{22.5cm}
\setlength{\textwidth}{15.5cm} \setlength{\oddsidemargin}{0cm}
\setlength{\evensidemargin}{0cm} \setlength{\topmargin}{0cm}

\allowdisplaybreaks

\begin{document}
\theoremstyle{plain}
\newtheorem{thm}{Theorem}[section]
\newtheorem{theorem}[thm]{Theorem}
\newtheorem{lemma}[thm]{Lemma}
\newtheorem{corollary}[thm]{Corollary}
\newtheorem{proposition}[thm]{Proposition}
\newtheorem{conjecture}[thm]{Conjecture}
\newtheorem{obs}[thm]{}
\theoremstyle{definition}
\newtheorem{construction}[thm]{Construction}
\newtheorem{notations}[thm]{Notations}
\newtheorem{question}[thm]{Question}
\newtheorem{problem}[thm]{Problem}
\newtheorem{remark}[thm]{Remark}
\newtheorem{remarks}[thm]{Remarks}
\newtheorem{definition}[thm]{Definition}
\newtheorem{claim}[thm]{Claim}
\newtheorem{assumption}[thm]{Assumption}
\newtheorem{assumptions}[thm]{Assumptions}
\newtheorem{properties}[thm]{Properties}
\newtheorem{example}[thm]{Example}
\newtheorem{comments}[thm]{Comments}
\newtheorem{blank}[thm]{}
\newtheorem{defn-thm}[thm]{Definition-Theorem}

\def\diag{\operatorname{diag}}
\def\vol{\operatorname{vol}}
\makeatletter
 \def\imod#1{\allowbreak\mkern10mu({\operator@font mod}\,\,#1)}
 \makeatother


\title[An explicit formula of hitting times for random walks on graphs]
{An explicit formula of hitting times for random walks on graphs}

        \author{Hao Xu}
        \address{Center of Mathematical Sciences, Zhejiang University, Hangzhou, Zhejiang 310027, China;
        Department of Mathematics, University of Pittsburgh, 301 Thackery Hall, Pittsburgh, PA 15260, USA}
        \email{mathxuhao@gmail.com}

        \author{Shing-Tung Yau}
        \address{Department of Mathematics, Harvard University, Cambridge, MA 02138, USA}
        \email{yau@math.harvard.edu}

        \begin{abstract} We prove an explicit formula of
        hitting times in terms of enumerations of spanning trees for random walks on general connected graphs. We apply
        the formula to improve Lawler's bound of hitting times for general graphs, prove a sharp bound of hitting times
        for adjacent vertices and derive closed
        formulas of hitting times for some special graphs.
        \end{abstract}

\keywords{Random walk, hitting time, spanning tree}
\thanks{{\bf MSC(2010)}  05C81 (05C50 60G50)}

    \maketitle

\section{Introduction}

Unless otherwise specified, throughout the paper, we assume $G=(V, E)$ to be an undirected graph
with $n=|V|$ vertices and without multi-edges or loops. The \emph{volume} of $G$ is
$\vol(G)=\sum_{v\in V} d_v$, where $d_v$ is the degree of $v$. Let $\tau(G)$ be the number of spanning trees of
$G$.
The \emph{Laplacian} of $G$ is the matrix $L=D-A$, where $D$ is
the diagonal matrix whose entries are the degree of the vertices and
$A$ is the adjacency matrix of $G$. For $x,y\in V$, $x\sim y$ denotes that they are adjacent vertices.

If $G$ is connected, eigenvalues of \emph{Chung's normalized Laplacian} $\mathcal{L}=D^{-1/2}LD^{-1/2}$
can be labeled by
$0=\lambda_1<\lambda_2\leq\lambda_3\leq\cdots\leq\lambda_{n}$ with
the corresponding orthonormal basis of eigenvectors
$v_1,v_2,\dots,v_{n}$. Let $v_i=(v_{i1},\dots,v_{in})^t$. Obviously $v_1(x)=\sqrt{d_x/\vol(G)},\forall x\in V$.

A \emph{random walk} on $G$ is a time-reversible finite Markov chain
that begins at some vertex, and at each step moves to a neighbor of the
present vertex $x$ with probability $1/d_x$.
The \emph{hitting time} $H(x,y)$ is
the expected number of steps to reach vertex $y$, when started from
vertex $x$. An excellent comprehensive survey of random walks on graphs can be
found in \cite{Lov}.

Chung and Yau \cite{CY} (see also \cite{Chu}) proved an explicit formula
of $H(x,y)$ in terms of the discrete Green function
\begin{equation}\label{eqgreen}
H(x,y)=\vol(G)\left(\frac{\mathscr G(y,y)}{d_y}-\frac{\mathscr G(x,y)}{\sqrt{d_x d_y}}\right).
\end{equation}
The discrete Green function $\mathscr{G}$ is uniquely defined by the equations
\begin{equation*}
 \mathscr{G}\mathcal{L} = \mathcal{L}\mathscr{G}  =
    I - P_0, \qquad
 \mathscr{G}P_0 = 0,\qquad P_0=v_1 v_1^t.
\end{equation*}
Chung-Yau's formula \eqref{eqgreen} is the starting point of our work \cite{XY}, which is continued here.

Lov\'asz proved a remarkable formula \cite[Thm. 3.1]{Lov} connecting hitting times to spectra of $\mathcal L$.
\begin{equation}\label{eqlov}
H(x,y)=\vol(G)\sum_{k=2}^n\frac{1}{\lambda_k}\left(\frac{v_{ky}}{d_y}-\frac{v_{kx}v_{ky}}{\sqrt{d_x d_y}}\right).
\end{equation}

Consider the graph $G$ as an electrical network, where each edge has unit resistance.
Tetali's electrical formula \cite{Tet} provides a powerful approach to
the computation of hitting times.
\begin{equation}\label{eqtet}
H(x,y)=\frac12 \sum_{z\in V(G)}d_z(R_{xy}+R_{yz}-R_{xz}),
\end{equation}
where $R_{xy}$ is the effective resistance between $x$ and $y$.

The paper is organized as follows: In \S
\ref{sectioninvariants}, we briefly review our previous work and prove an explicit formula of $H(x,y)$ in Theorem 2.7
together with some interesting applications.
In \S
\ref{secnetwork}, we present a proof of Tetali's electrical formula. In \S \ref{secexample}, we apply our formula to
recover some identities of hitting times of random walks on
lollipop graphs and unicycle graphs.

\

\noindent{\bf Acknowledgements} We thank Stephan Wagner for helpful comments on an earlier version of this paper.
We also thank a referee for providing an alternative proof of Corollary \ref{ht} (see Remark \ref{rm2}).

\vskip 30pt
\section{Explicit formulas of Hitting times}\label{sectioninvariants}

A \emph{vertex-weighted graph} is a graph $G$ together with a weight function $w: V(G)\rightarrow \mathbb R$. In
our case, $w_x$ at $x\in V(G)$ will usually be the
degree of $x$ in some ambient graph of $G$. So we may assume
$d_x\leq w_x\in \mathbb Z$. Denote by $d_G$ the weight function that takes $d_x$ for each $x\in V(G)$.

In \cite{XY}, we defined two invariants $R(G , w)$ and $Z(G , w)$ for a vertex-weighted graph $(G,
w)$. For the empty graph $\emptyset$, we define $R(\emptyset, w)=1$ and
$Z(\emptyset, w)=0$. For any given vertex $x\in V(G)$, they satisfy the recursive formulas
\begin{align} \label{eqR}
R(G, w)&=w_{x}R(G-\{x\}, w)-\sum_{{y\in V(G)}\atop {y\sim
x}}\sum_{P\in \mathscr P_{G}(x,y)} R(G-\{P\}, w),
\\\label{eqZ} Z(G, w)
&=w_{x}Z(G-\{x\}, w)-\sum_{{y\in V(G)}\atop {y\sim
x}}\sum_{P\in \mathscr P_{G}(x,y)} Z(G-\{P\}, w)\\
&\qquad +w_x^2R(G-\{x\},w)+\sum_{{u,v\in V(G)}\atop{u\neq
v}}\sum_{{{P_1 \in \mathscr P_G (x,u)}\atop{P_2 \in \mathscr
P_G(x,v)}}\atop P_1\cap P_2=x}w_u w_v R(G-\{P_1, P_2\}, w).\nonumber
\end{align}
where $\mathscr P_G(x,y)$ is the set of all simple undirected paths (with no repeated vertices) connecting
$x$ and $y$ in $G$. By convention $\mathscr P_G(x,x)$
consists of the trivial path $\{x\}$ only. Here $(G-\{P\},w)$ means the restriction of $w$ to the subgraph $G-\{P\}$.
Note that \eqref{eqR} and \eqref{eqZ} uniquely determine these invariants.

The invariants $R(G, w)$ and $Z(G, w)$ enjoy the following nice properties.
\begin{lemma}[\cite{XY}]
If $G$ has $k$ connected components $G_1, \dots,
G_k$, then
\begin{equation}
R(G, w)=\prod^k_{i=1}R(G_i, w),\qquad Z(G, w)=\sum^k_{i=1}Z(G_i, w)\prod^k_{{j=1}\atop {j\neq i}}
R(G_j, w).
\end{equation}
\end{lemma}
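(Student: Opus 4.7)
The plan is to prove both identities simultaneously by induction on $|V(G)|$. The base case $V(G)=\emptyset$ is immediate from $R(\emptyset,w)=1$ and $Z(\emptyset,w)=0$, together with the conventions that empty products equal $1$ and empty sums equal $0$. For the inductive step, fix $x\in V(G)$ and, without loss of generality, assume $x\in V(G_1)$; then apply the defining recursions \eqref{eqR} and \eqref{eqZ} at $x$. The crucial observation is that every neighbor of $x$ in $G$, every path $P\in\mathscr P_G(x,y)$, and every pair of internally disjoint paths emanating from $x$, lies entirely within $G_1$. Consequently, each subgraph appearing on the right-hand side of \eqref{eqR} or \eqref{eqZ} has the form $(G_1-X)\sqcup G_2\sqcup\cdots\sqcup G_k$ for some vertex/path subset $X\subseteq V(G_1)$, and each has strictly fewer vertices than $G$, so the inductive hypothesis applies.

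For $R(G,w)$, the inductive hypothesis rewrites $R(G-\{x\},w)$ and each $R(G-\{P\},w)$ as $R(G_1-X,w)\prod_{j\geq 2}R(G_j,w)$, using the hypothesis first to factor across all components and then, applied within $G_1-X$, to collapse the components of $G_1-X$ back into $R(G_1-X,w)$. Factoring the common $\prod_{j\geq 2}R(G_j,w)$ out of the right-hand side of \eqref{eqR} leaves exactly the recursion defining $R(G_1,w)$ at $x$, yielding $R(G,w)=\prod_{j=1}^kR(G_j,w)$.

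For $Z(G,w)$, the terms $w_x^2R(G-\{x\},w)$ and the double-path sum in \eqref{eqZ} factor cleanly using the multiplicative formula for $R$ just established. For $Z(G-\{x\},w)$ and each $Z(G-\{P\},w)$, the inductive hypothesis splits the component sum into two types: components lying inside $G_1-X$ reassemble (by a second application of induction within $G_1-X$) into $Z(G_1-X,w)\prod_{j\geq 2}R(G_j,w)$, while components $H=G_j$ for $j\geq 2$ contribute $R(G_1-X,w)\sum_{j\geq 2}Z(G_j,w)\prod_{j'\geq 2,\,j'\neq j}R(G_{j'},w)$. Substituting back into \eqref{eqZ} and collecting terms, the coefficients of $\prod_{j\geq 2}R(G_j,w)$ reassemble into the defining recursion for $Z(G_1,w)$, while the coefficients of $\sum_{j\geq 2}Z(G_j,w)\prod_{j'\geq 2,\,j'\neq j}R(G_{j'},w)$ reassemble into the recursion for $R(G_1,w)$. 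Together these yield exactly $Z(G_1,w)\prod_{j\geq 2}R(G_j,w)+R(G_1,w)\sum_{j\geq 2}Z(G_j,w)\prod_{j'\geq 2,\,j'\neq j}R(G_{j'},w)=\sum_{i=1}^kZ(G_i,w)\prod_{j\neq i}R(G_j,w)$.

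The main obstacle is purely organizational: one must split the inductive decomposition of the $Z$-terms uniformly into a "$Z$-in-$G_1$" piece and a "$Z$-in-some-$G_j$" piece across every summand, so that after substitution the two resulting blocks separately reproduce the two defining recursions \eqref{eqZ} and \eqref{eqR} for $G_1$. Once this bookkeeping is made explicit, the rest of the proof follows mechanically from the recursive definitions.
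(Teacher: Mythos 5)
Your proof is correct. Note that the paper does not actually prove this lemma here --- it is quoted from the earlier work \cite{XY} --- so there is no in-paper argument to compare against; but your double induction on $|V(G)|$ is the natural self-contained proof given that \eqref{eqR} and \eqref{eqZ} are taken as the defining recursions. The key observations you isolate are exactly the right ones: choosing the pivot $x$ inside $G_1$ (which is legitimate because the paper asserts the recursions determine $R$ and $Z$ independently of the choice of $x$), noting that all neighbors, paths, and path-pairs based at $x$ stay inside $G_1$, and then using the inductive hypothesis twice --- once to factor $G-X$ across all of its components and once to reassemble the components of $G_1-X$ --- so that the bracketed coefficients of $\prod_{j\geq 2}R(G_j,w)$ and of $\sum_{j\geq 2}Z(G_j,w)\prod_{j'\neq j}R(G_{j'},w)$ are literally the recursions defining $Z(G_1,w)$ and $R(G_1,w)$ respectively. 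The bookkeeping you describe in the last paragraph is exactly the step that makes this work, and it goes through as stated.
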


\begin{lemma}[\cite{XY}] \label{Z} We have
\begin{equation}\label{eqZ2}
Z(G,w)=\sum_{x,y\in V(G)}\sum_{P \in \mathscr P_G
(x,y)}w_x w_y R(G-\{P\},w).
\end{equation}
\end{lemma}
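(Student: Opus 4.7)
Let $S(G,w)$ denote the right-hand side of \eqref{eqZ2}. The plan is to show, by induction on $|V(G)|$, that $S$ satisfies the same recursion \eqref{eqZ} and initial value $S(\emptyset,w)=0$ as $Z$; since \eqref{eqR}--\eqref{eqZ} uniquely determine these invariants, this forces $S=Z$. The base case is immediate, so in the inductive step I fix a vertex $x\in V(G)$ and split the (ordered) double sum defining $S(G,w)$ into two regimes depending on whether $x\in P$.

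When $x\in P$, I partition further by the role of $x$: the trivial path $P=\{x\}$ contributes exactly $w_x^2R(G-\{x\},w)$; if $x$ is an endpoint of a non-trivial $P\in\mathscr P_G(x,v)$, the ordered pairs $(u,v)=(x,v)$ and $(v,x)$ each pick up the same $P$, so such $P$ appears with multiplicity two; and if $x$ is an interior vertex of $P$, then $P$ splits uniquely as $P=P_1\cup P_2$ with $P_1\in\mathscr P_G(x,u)$, $P_2\in\mathscr P_G(x,v)$ non-trivial and $P_1\cap P_2=\{x\}$. Allowing one of $P_1,P_2$ to degenerate to $\{x\}$ absorbs the endpoint sub-case into the two-path sum with the correct multiplicity, and one recovers exactly
\[w_x^2R(G-\{x\},w)+\sum_{u\neq v}\sum_{\substack{P_1\in\mathscr P_G(x,u)\\P_2\in\mathscr P_G(x,v)\\P_1\cap P_2=\{x\}}}w_u w_v R(G-\{P_1,P_2\},w),\]
i.e., the last two terms of \eqref{eqZ}.

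When $x\notin P$, the path $P$ lies in $G-\{x\}$ and $x\in V(G-\{P\})$, so I apply the $R$-recursion \eqref{eqR} at $x$ to $R(G-\{P\},w)$. The leading term $w_x R((G-\{x\})-\{P\},w)$, summed over $(u,v,P)$ in $G-\{x\}$, yields $w_x S(G-\{x\},w)$. The remaining piece is indexed by vertex-disjoint pairs $(P,Q)$, where $Q\in\mathscr P_{G-\{P\}}(x,y)$ for some $y\sim x$; interchanging the order of summation (summing first over $(y,Q)$ and then over $(u,v,P)$ inside $G-\{Q\}$) rewrites it as $-\sum_{y\sim x}\sum_{Q\in\mathscr P_G(x,y)}S(G-\{Q\},w)$. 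By the inductive hypothesis, $S=Z$ on the strictly smaller graphs $G-\{x\}$ and $G-\{Q\}$, which reproduces the first two terms of \eqref{eqZ}.

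The main obstacle is the combinatorial bookkeeping: one must verify that the degenerate pairs with $P_1=\{x\}$ or $P_2=\{x\}$ in the two-path sum of \eqref{eqZ} exactly account for the endpoint sub-case (with the correct multiplicity coming from the ordered pair $(u,v)$), and that the swap of summation in the $x\notin P$ regime is a bijection onto all vertex-disjoint pairs $(P,Q)$ in $G$ with $Q$ issuing from $x$ to one of its neighbors. Both are straightforward once the ordered/unordered conventions are pinned down.
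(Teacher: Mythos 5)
Your proposal is correct, and it is complete: the case split on whether the chosen vertex $x$ lies on the path $P$, the absorption of the endpoint sub-case into the degenerate terms $P_1=\{x\}$ or $P_2=\{x\}$ of the two-path sum (with the right multiplicity from ordered pairs), and the exchange of summation identifying the cross terms with $-\sum_{y\sim x}\sum_{Q}S(G-\{Q\},w)$ all check out, so $S$ satisfies recursion \eqref{eqZ} and the uniqueness of the solution of \eqref{eqR}--\eqref{eqZ} gives $S=Z$. Note that the present paper states Lemma \ref{Z} without proof, citing \cite{XY}; your induction against the defining recursion is the natural argument for such a statement and is consistent with how the cited source establishes it.
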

\begin{lemma}[\cite{XY}] \label{RZ} Let
$G$ be a connected graph, then $R(G,d_G)=0$ and $Z(G,d_{G})=\vol(G)^2\tau(G)$. For any $x,y\in V(G)$, we have
\begin{equation} \label{eqgreen13}
R(G-\{x\},d_G)=\sum_{P \in \mathscr P_G (x,y)}
R(G-\{P\},d_G)=\tau(G).
\end{equation}
\end{lemma}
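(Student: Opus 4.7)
The plan is to identify the invariant $R(H, d_G|_H)$, for an induced subgraph $H = G[S]$ of $G$, with a principal minor of the combinatorial Laplacian. Specifically, I would prove that
\[
R(G[S], d_G|_S) = \det L(G)_{S,S},
\]
where $L(G) = D - A$ and $L(G)_{S,S}$ denotes the principal submatrix indexed by $S$. With this identification in hand, the first two assertions of the lemma are immediate: $R(G, d_G) = \det L(G) = 0$ because $L(G)$ is singular for connected $G$ (the all-ones vector lies in its kernel), and $R(G - \{x\}, d_G) = \det L(G)_{V \setminus \{x\}, V \setminus \{x\}} = \tau(G)$ by Kirchhoff's matrix-tree theorem.

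The identification itself is proved by induction on $|S|$, showing that $\det L(G)_{S,S}$ satisfies the same defining recurrence \eqref{eqR} as $R(G[S], d_G|_S)$. Expanding $\det L(G)_{S,S}$ along the row of a chosen vertex $x \in S$ produces the diagonal term $d_x \det L(G)_{S-x, S-x}$ together with off-diagonal cofactor contributions $\pm \det L(G)_{S-x, S-z}$ for each neighbor $z$ of $x$ in $S$. The main technical obstacle is that these off-diagonal terms are mixed (non-principal) minors, whereas \eqref{eqR} demands a sum over simple paths from $x$ to $z$ of \emph{principal} minors. Closing this gap requires a path expansion identity of the form
\[
\pm\,\det L(G)_{S-x, S-z} = \sum_{P \in \mathscr P_{G[S]}(x,z)} \det L(G)_{S - V(P), S - V(P)},
\]
which can be established by repeatedly expanding the mixed minor along its rows and grouping the resulting terms according to the successive vertices of the path being constructed.

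The identity $\sum_{P \in \mathscr P_G(x,y)} R(G - \{P\}, d_G) = \tau(G)$ then admits a transparent bijective proof: each spanning tree $T$ of $G$ contains a unique $x$--$y$ path $P_T$, and deleting the edges of $P_T$ decomposes $T$ into a spanning forest $F_T$ of $G - V(P_T)$ whose components are each attached to exactly one vertex of $V(P_T)$. The map $T \mapsto (P_T, F_T)$ is a bijection, and by the all-minors generalization of the matrix-tree theorem the forests paired with a fixed path $P$ are enumerated by $\det L(G)_{V - V(P), V - V(P)} = R(G - \{P\}, d_G)$; summing over $P$ then recovers $\tau(G)$. Finally, $Z(G, d_G) = \vol(G)^2 \tau(G)$ follows immediately from Lemma \ref{Z} together with this path-sum identity:
\[
Z(G, d_G) = \sum_{x,y \in V(G)} d_x d_y \sum_{P \in \mathscr P_G(x,y)} R(G - \{P\}, d_G) = \tau(G) \sum_{x,y \in V(G)} d_x d_y = \vol(G)^2 \tau(G).
\]
The principal technical hurdle throughout is the path expansion identity for mixed Laplacian minors.
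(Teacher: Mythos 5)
Your proposal is correct and follows essentially the route the paper itself indicates: the identification $R(G[S],d_G|_S)=\det L(G)_{S,S}$ is exactly the content of Remark \ref{rm1} (since $\det L(G)_{S,S}$ is the reduced Laplacian determinant of the completion graph $\overline{G[S]}$, i.e.\ $\tau(\overline{G[S]})$), and the paper defers the details, including your ``path expansion identity'' for the mixed minors arising in \eqref{eqR}, to the proof of Lemma 2.13 of \cite{XY}. Your remaining steps --- Kirchhoff for $R(G-\{x\},d_G)=\tau(G)$, the tree-decomposition bijection (or the rooted-forest minor theorem) for $\sum_{P}R(G-\{P\},d_G)=\tau(G)$, and Lemma \ref{Z} for $Z(G,d_G)=\vol(G)^2\tau(G)$ --- are all sound.
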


\begin{remark}\label{rm1}
Recall a well-known result from linear algebra:
Let $G$ be a connected graph with possibly multi-edges but no loops. Let $L$ be its Laplacian matrix and $L'$
the matrix obtained by deleting the first row and column from $L$. Then
$$\tau(G) = \det(L').$$
Given a vertex-weighted graph $(G,w)$, define the completion graph $\overline G$ of
$G$ to be a multi-graph with $V(\overline G)=V(G)\cup\{\bullet\}$ and $E(\overline G)$ consists of $E(G)$ plus $w_v-d_v$
newly added edges between $\bullet$
and $v\in V(G)$ for each $v\in V(G)$.
It is not difficult to see that
$R(G,w)=\tau(\overline G)$. See the proof of \cite[Lem. 2.13]{XY} for details.
\end{remark}

The main result of \cite{XY} is an explicit formula of hitting times in terms of the invariants $R(G, w)$ and $Z(G, w)$.
\begin{theorem}[\cite{XY}]\label{hittingtime}
Let $G$ be a connected graph and $x, y\in V(G)$. Then
\begin{multline}\label{eqwalk}
H(x,y)=\frac{1}{\vol(G)\tau(G)}\Bigg(Z(G-\{y\},d_G)
-\sum_{P\in \mathscr P_{G}(x,y)}Z(G-\{P\},d_G)
\\\left. +\sum_{{u,v\in V(G)}\atop{u\neq v}}\sum_{{{P_1 \in
\mathscr P_G (x,u)}\atop{P_2 \in \mathscr P_G(y,v)}}\atop
P_1\cap P_2=\emptyset}d_u d_v R(G-\{P_1, P_2\},d_G)\right).
\end{multline}
\end{theorem}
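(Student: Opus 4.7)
My plan is to anchor the proof in Chung--Yau's identity \eqref{eqgreen} and translate its two Green-function entries into spanning-forest enumerations of $G$ that can then be matched to the invariants $R$ and $Z$ term by term. The key combinatorial fact, recorded in Remark~\ref{rm1}, is that $R(G-\{P\},d_G)=\det(L_{V\setminus P})$ with $L=D-A$; by the all-minors matrix-tree theorem this principal minor counts spanning $|P|$-forests of $G$ whose trees are in bijection with the vertices of $P$, each containing exactly one of them. The same interpretation applies to $R(G-\{P_1,P_2\},d_G)$, and via Lemma~\ref{Z} this bootstraps up to combinatorial interpretations of $Z(G-\{y\},d_G)$ and $Z(G-\{P\},d_G)$ as weighted forest counts in $G$.

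I then rewrite Chung--Yau as $H(x,y)=\vol(G)\bigl(L^+(y,y)-L^+(x,y)\bigr)$, using $\mathscr G=D^{1/2}L^+D^{1/2}$, and express the entries of the Moore--Penrose pseudoinverse $L^+$ as ratios with $\tau(G)$ in the denominator and signed sums of two-row-column-deleted principal minors of $L$ in the numerator. Multiplying through by $\vol(G)\tau(G)$ turns Chung--Yau into a purely combinatorial statement among spanning-forest enumerations, which I split into diagonal and off-diagonal pieces. The diagonal piece $\vol(G)^2\tau(G)L^+(y,y)$ reorganizes directly into $Z(G-\{y\},d_G)$ by invoking Lemma~\ref{Z} on $G-\{y\}$ with weight function $d_G$, since $\sum_{u,v}d_ud_v\cdot\#(\text{forests separating }\{u,v,y\})=Z(G-\{y\},d_G)$. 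The off-diagonal piece $\vol(G)^2\tau(G)L^+(x,y)$ must reproduce $\sum_{P\in\mathscr P_G(x,y)}Z(G-\{P\},d_G)-\sum_{u\neq v}\sum d_ud_v R(G-\{P_1,P_2\},d_G)$; the natural route is to decompose each spanning tree $T$ of $G$ along its unique $x$--$y$ path $P$, and then regroup the resulting weighted forest contributions according to the pair of $T$-paths from $(x,u)$ and $(y,v)$ to auxiliary vertices $u$ and $v$. Vertex-disjoint pairs $(P_1,P_2)$ yield the negative contribution to $L^+(x,y)$, which turns into the positive double-path correction of the theorem after the sign flip from $-L^+(x,y)$ in Chung--Yau.

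The principal obstacle is precisely this off-diagonal identification: establishing the combinatorial identity that reconciles the two-point pseudoinverse entry $L^+(x,y)$ with the path-indexed $Z$-sum plus the vertex-disjoint double-path correction. The remaining steps are either routine expansions of the lemmas recalled above or standard matrix-tree-theorem manipulations, but this ``two-point'' identity is the genuinely new content of Theorem~\ref{hittingtime}, and it is the place where the distinctive sum over pairs of vertex-disjoint paths starting at $x$ and $y$ naturally emerges.
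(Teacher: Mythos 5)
This theorem is imported from \cite{XY} and is not proved in the present paper, so the only internal benchmark is the remark that Chung--Yau's formula \eqref{eqgreen} is the starting point of \cite{XY}; your overall plan (translate the two Green-function entries in \eqref{eqgreen} into forest enumerations and match them with $R$ and $Z$) is indeed that strategy. However, your first reduction is false. The identity $\mathscr G=D^{1/2}L^+D^{1/2}$, and hence $H(x,y)=\vol(G)\bigl(L^+(y,y)-L^+(x,y)\bigr)$, fails for non-regular graphs: the kernel of $\mathscr G$ is spanned by $v_1=D^{1/2}\mathbf{1}/\sqrt{\vol(G)}$, whereas the kernel of $D^{1/2}L^+D^{1/2}$ is spanned by $D^{-1/2}\mathbf{1}$, and these coincide only when $G$ is regular. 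Concretely, for the path on three vertices $1\sim 2\sim 3$ one has $H(1,2)=1$, while $L^+(2,2)=\tfrac29$, $L^+(1,2)=-\tfrac19$, so $\vol(G)\bigl(L^+(2,2)-L^+(1,2)\bigr)=4\cdot\tfrac13=\tfrac43$. The correct object is the generalized inverse $G=D^{-1/2}\mathscr G D^{-1/2}$ satisfying $GL=I-\mathbf{1}d^t/\vol(G)$, which is \emph{not} the Moore--Penrose pseudoinverse, and whose entries are not the symmetric signed principal minors of $L$ you propose to expand; this changes the bookkeeping in both your ``diagonal'' and ``off-diagonal'' pieces. (Your combinatorial reading of $R(G-\{P\},d_G)=\tau(G/\{P\})=\det(L_{V\setminus P})$ as counting spanning forests separating the vertices of $P$ is fine, via Remark \ref{rm1} and the all-minors matrix-tree theorem.)

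The more serious issue is that you explicitly defer the off-diagonal identification --- reconciling $\vol(G)^2\tau(G)$ times the off-diagonal Green entry with $\sum_{P\in\mathscr P_G(x,y)}Z(G-\{P\},d_G)-\sum_{u\neq v}d_ud_v\,R(G-\{P_1,P_2\},d_G)$ --- labelling it ``the principal obstacle.'' That identity \emph{is} the theorem: once the Green entries are known to equal these $R$/$Z$ combinations, \eqref{eqwalk} is immediate from \eqref{eqgreen}. ``Decompose each spanning tree along its unique $x$--$y$ path and regroup'' describes the desired outcome rather than producing it; in particular it does not explain how the vertex-disjointness condition $P_1\cap P_2=\emptyset$ or the sign pattern arises from the minor expansion. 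As written, the proposal is an outline of the right strategy with one incorrect step and the central combinatorial lemma missing, so it does not constitute a proof.
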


\begin{remark}
According to \cite{Geo}, a graph $G$ is called \emph{reversible}
if $H(x,y)=H(y,x)$ holds for any $x,y\in V(G)$. It is not difficult to see that
\eqref{eqwalk} implies that $G$ is reversible if and only if $Z(G-\{x\},d_G)$ is
independent of the vertex $x$. An immediate corollary is that vertex-transitive graphs are reversible.
See \cite[Cor. 2.6]{Lov} for an alternative proof of this assertion.
It is interesting to compare with the result
in \cite[Cor. 4]{Tet} (also cf. \cite{GW}) that $G$ is reversible if and only if $\sum_{u\in V(G)}d_uR_{vu}$ is
independent of the vertex $v$, where $R_{vu}$
is the effective resistance between $v$ and $u$.
We hope our work will be useful to study the interesting problems on reversible graphs posed by Georgakopoulos \cite{Geo}.
\end{remark}

\begin{theorem}\label{hittingtime2}
Let $G$ be a connected graph and $x, y\in V(G)$. Then
\begin{equation}\label{eqhit}
H(x,y)=\frac{1}{\tau(G)}\sum_{u\in V(G)}d_u \sum_{P \in \mathscr P_G (x,u)\atop y\notin P}R(G-\{P,y\}, d_G).
\end{equation}
In fact, $R(G-\{P,y\}, d_G)=\tau(G/\{P,y\})$.
\end{theorem}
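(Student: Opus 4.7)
For the second assertion, $R(G-\{P,y\},d_G)=\tau(G/\{P,y\})$ follows from Remark~\ref{rm1}: the completion of $G-\{P,y\}$ under the weight $d_G$ adjoins a new vertex $\bullet$ joined to each remaining $v$ by $d_G(v)-d_{G-\{P,y\}}(v)$ edges, i.e.\ by exactly the number of $G$-edges from $v$ to $V(P)\cup\{y\}$. Hence $\overline{G-\{P,y\}}$ is isomorphic to the multigraph contraction $G/\{P,y\}$, and their spanning-tree counts coincide.

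For~\eqref{eqhit}, the plan is to deduce it from Theorem~\ref{hittingtime} by collapsing the three terms of~\eqref{eqwalk}. Write the right-hand side of~\eqref{eqwalk} as $A-B+C$. Expanding $A=Z(G-\{y\},d_G)$ and each $Z(G-\{P\},d_G)$ appearing in $B$ via Lemma~\ref{Z}, then reading each $R(G-S,d_G)$ as $\tau(G/S)$ (i.e.\ as the number of spanning forests of $G$ whose components are in bijection with $S$, each containing exactly one vertex of $S$), reveals a common combinatorial structure: a tuple consisting of one or two prescribed paths and a rooted spanning forest assembles into a spanning $2$-forest $\Phi$ of $G$, and the path(s) are uniquely recovered as the tree-paths inside the components of $\Phi$ between prescribed endpoints.

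Carrying this out yields, for $\Phi$ ranging over spanning $2$-forests of $G$,
\begin{align*}
A&=\sum_{\Phi}\vol(C_{\bar y}(\Phi))^2,\\
B&=\sum_{\Phi:\,x,\,y\text{ in the same component}}\vol(V\setminus C_{xy}(\Phi))^2,\\
C&=\sum_{\Phi\text{ separating }x,y}\vol(C_x(\Phi))\vol(C_y(\Phi)),
\end{align*}
where $C_{\bar y}(\Phi)$ denotes the component not containing $y$ and $C_{xy}(\Phi)$ the common component of $x$ and $y$. Contributions from $\Phi$ with $x,y$ in the same component cancel in $A-B$; for $\Phi$ separating $x$ and $y$, $C_{\bar y}=C_x$, so $\vol(C_x)^2+\vol(C_x)\vol(C_y)=\vol(C_x)\vol(G)$ and we obtain
\[A-B+C=\vol(G)\sum_{\Phi\text{ sep.\ }x,y}\vol(C_x(\Phi)).\]
The same bijection applied to the right-hand side of~\eqref{eqhit}---a triple consisting of $u\in V(G)$, a path $P\in\mathscr P_G(x,u)$ with $y\notin P$, and a spanning forest rooted at $V(P)\cup\{y\}$, weighted by $d_u$---assembles into a spanning $2$-forest $\Phi$ separating $x,y$ with $u\in C_x(\Phi)$; summing over $u$ gives
\[\sum_{u\in V(G)}d_u\sum_{\substack{P\in\mathscr P_G(x,u)\\y\notin P}}R(G-\{P,y\},d_G)=\sum_{\Phi\text{ sep.\ }x,y}\vol(C_x(\Phi)).\]
Dividing Theorem~\ref{hittingtime} by $\vol(G)\tau(G)$ then yields~\eqref{eqhit}.

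The main obstacle will be setting up the bijection precisely and verifying the degenerate cases---trivial paths with coinciding endpoints ($a=b$ in $Z$, or $u=x$), and spanning $2$-forests with an isolated vertex---so that every spanning $2$-forest is counted with the correct multiplicity in each of $A$, $B$, $C$, and in the right-hand side of~\eqref{eqhit}.
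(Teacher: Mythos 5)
Your proof is correct, and it begins exactly where the paper does: apply Lemma~\ref{Z} and the identification $R(G-S,d_G)=\tau(G/S)$ from Remark~\ref{rm1} to rewrite \eqref{eqwalk} as a triple sum of contracted spanning-tree counts (the paper's \eqref{eqwalk2}). Where you diverge is in how the three terms are collapsed. The paper fixes $u,v$ and proves that the bracket term $F(x,y,u,v)$ is independent of $v$ by a per-tree inclusion--exclusion inside $\Omega(G')$, where $G'=G\cup\{uy\}$; the factor $\vol(G)$ then appears simply by summing a constant over $v$. You instead sum over everything at once, reinterpret each of $A$, $B$, $C$ as a statistic of spanning $2$-forests of $G$ (which are the same objects as the paper's spanning trees of $G'$ containing $uy$, after deleting that edge), and obtain the cancellation from the volume identity $\vol(C_x(\Phi))+\vol(C_y(\Phi))=\vol(G)$. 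Your route makes the origin of the $\vol(G)$ factor and the positivity of the final formula more transparent, and it produces the clean intermediate statement $H(x,y)=\tau(G)^{-1}\sum_{\Phi}\vol(C_x(\Phi))$, the sum running over spanning $2$-forests separating $x$ from $y$, which is of independent interest; the paper's route avoids any global bookkeeping of $2$-forests and needs only the elementary observation that the three conditions on a fixed tree $T\in\Omega(G')$ combine to ``$T$ contains $uy$.'' The degenerate cases you flag (trivial paths with $a=b$ or $u=x$, a component reduced to $\{y\}$) are all absorbed by the convention $\mathscr P_G(x,x)=\{\{x\}\}$ and cause no difficulty, so I see no gap.
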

\begin{proof}By Lemma \ref{Z} and Remark \ref{rm1}, the formula \eqref{eqwalk} is equivalent to
\begin{multline}\label{eqwalk2}
H(x,y)=\frac{1}{\vol(G)\tau(G)}\sum_{u,v\in V(G)}d_u d_v\left(\sum_{P\in \mathscr P_{G}(u,v)\atop y\notin P} \tau(G/\{P,y\})\right.
\\\left. -\sum_{{P_1\in \mathscr P_{G}(x,y)\atop P_2\in \mathscr P_{G}(u,v)}\atop P_1\cap P_2=\emptyset} \tau(G/\{P_1,P_2\})
+\sum_{{{P_1 \in
\mathscr P_G (x,u)}\atop{P_2 \in \mathscr P_G(y,v)}}\atop
P_1\cap P_2=\emptyset} \tau(G/\{P_1, P_2\})\right),
\end{multline}
where $G/\{P,y\}$ and $G/\{P_1,P_2\}$ denote (multi-)graphs obtained from $G$ by contracting $\{P,y\}$ and $\{P_1,P_2\}$ to a point respectively.

Denote by $F(x,y,u,v)$ the bracket term of \eqref{eqwalk2}. We will show that for any fixed vertices $x,y,u$, $F(x,y,u,v)$ is independent of $v$.
This is obvious when $x=y$ or $u=y$, which forces $F(x,y,u,v)=0,\forall v\in V(G)$.
When $v=x$ or $y$, we have
\begin{equation}\label{eqt}
F(x,y,u,x)=F(x,y,u,y)=\sum_{P\in \mathscr P_{G}(x,u)\atop y\notin P} \tau(G/\{P,y\}).
\end{equation}
Assume $v\neq x,y$, we modify $G$ by adding an edge $uy$ if $u,y$ are not adjacent, namely we
define a simple graph $G'$ by
\begin{equation*}
G'= \left\{ \begin{array}{ll}
        G & \mbox{if $u\sim y$},        \\
        G\cup\{uy\} & \mbox{otherwise}.  \end{array}\right.
\end{equation*}
Denote by $\Omega(G')$ the set of spanning trees of $G'$.
Each of the three summations in $F(x,y,u,v)$ counts a subset of $\Omega(G')$. They are respectively equal to
\begin{gather}\label{eqt2}
\#\{T\in\Omega(G')\mid T \mbox{ contains $uy$ and a path from $u$ to $v$ not containing $y$} \},\\ \label{eqt3}
\#\{T\in\Omega(G')\mid T \mbox{ contains $uy$, a path $P_1$ from $x$ to $y$ and a path $P_2$ from $u$ to $v$}\\\nonumber
 \qquad\qquad\mbox {such that } P_1\cap P_2=\emptyset\},\\\label{eqt4}
\#\{T\in\Omega(G')\mid T \mbox{ contains $uy$, a path $P_1$ from $u$ to $x$ and a path $P_2$ from $v$ to $y$}\\\nonumber
\qquad\qquad\mbox {such that } P_1\cap P_2=\emptyset\}.
\end{gather}
It is not difficult to see that
\begin{align*}
F(x,y,u,v)&=\eqref{eqt2}-\eqref{eqt3}+\eqref{eqt4}\\
&=\#\{T\in\Omega(G')\mid T \mbox{ contains $uy$ and a path from $u$ to $x$} \}\\
&=F(x,y,u,x),
\end{align*}
which proves that $F(x,y,u,v)$ is independent of $v\in V(G)$. The last equation used \eqref{eqt}.
Since $\tau(G/\{P,y\})=R(G-\{P,y\}, d_G)$, we get \eqref{eqhit} immediately from \eqref{eqwalk2}.
\end{proof}

As an application of the above theorem, we give a simple proof of the well-known inequality $H(x,y)\leq O(n^3)$,
where $n=|V(G)|$.
\begin{corollary}
Let $G$ be a connected graph with $n$ vertices and $x, y\in V(G)$. Then
\begin{equation}\label{eqt5}
H(x,y)\leq (n-1)^3.
\end{equation}
If in addition $d_x\leq k, \forall x\in V(G)$, then
\begin{equation}\label{eqt6}
H(x,y)\leq k(n-1)^2.
\end{equation}
\end{corollary}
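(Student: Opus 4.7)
The plan is to start from the formula of Theorem~\ref{hittingtime2},
\begin{equation*}
H(x,y)=\frac{1}{\tau(G)}\sum_{u\in V(G)}d_u\sum_{P\in\mathscr P_G(x,u),\,y\notin P}\tau(G/\{P,y\}),
\end{equation*}
and reinterpret the inner double sum as a count of spanning $2$-forests of $G$. A spanning tree of the contracted multigraph $G/\{P,y\}$ is in bijection with a spanning forest $F$ of $G$ having $|V(P)|+1$ tree components, each containing exactly one vertex of $V(P)\cup\{y\}$ and using no edge with both endpoints in $V(P)\cup\{y\}$. Given such a pair $(P,F)$, the union $F\cup E(P)$ is a spanning $2$-forest whose two components separate $y$ from $\{x,u\}$; conversely, any such $2$-forest determines $P$ as the unique $x$-$u$ path inside its non-$y$ component, and $F$ is what remains after deleting $E(P)$. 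Letting $\mathcal F(y;x)$ denote the set of spanning $2$-forests of $G$ separating $y$ from $x$ and $B_F$ the component of $F$ not containing $y$, swapping the order of summation then yields
\begin{equation*}
H(x,y)=\frac{1}{\tau(G)}\sum_{F\in\mathcal F(y;x)}\vol(B_F).
\end{equation*}

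The remainder consists of two elementary estimates. Since $G$ is simple, every degree is at most $n-1$; moreover $y\notin B_F$ forces $|B_F|\leq n-1$, so $\vol(B_F)\leq(n-1)|B_F|\leq(n-1)^2$. To bound the cardinality of $\mathcal F(y;x)$, I would use the surjection $(T,e)\mapsto T\setminus e$ from pairs with $T$ a spanning tree of $G$ and $e$ an edge on the $y$-$x$ path in $T$ onto $\mathcal F(y;x)$; surjectivity follows because any $F\in\mathcal F(y;x)$ can be completed to a spanning tree using any $G$-edge between its two components. Since a $y$-$x$ path in any tree has at most $n-1$ edges, $|\mathcal F(y;x)|\leq(n-1)\tau(G)$. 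Multiplying these bounds gives
\begin{equation*}
H(x,y)\leq\frac{1}{\tau(G)}\cdot(n-1)\tau(G)\cdot(n-1)^2=(n-1)^3.
\end{equation*}
The sharper inequality \eqref{eqt6} follows by replacing $d_u\leq n-1$ with $d_u\leq k$ in the bound on $\vol(B_F)$: one gets $\vol(B_F)\leq k(n-1)$ and hence $H(x,y)\leq k(n-1)^2$.

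The main obstacle is the bijective reinterpretation of the inner sum in the first step: the contraction creates multi-edges and loops, so one must check carefully which edges of $G$ appear in the corresponding forest and that no cycles are introduced when one forms $F\cup E(P)$. Once this identification is made, the upper bound reduces to the two elementary counting inequalities above.
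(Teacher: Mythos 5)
Your argument is correct and yields exactly the bound $d_{\max}(n-1)^2$ that the paper extracts from \eqref{eqhit}, but it is organized differently enough to be worth comparing. The paper fixes $u\neq y$, identifies the inner sum $\sum_{P}\tau(G/\{P,y\})$ with a set of spanning trees of the augmented graph $G'=G\cup\{uy\}$ containing the edge $uy$, bounds that set by $(n-1)\tau(G)$ via the surjection $(T,e)\mapsto T(e)$, and then sums over the at most $n-1$ relevant vertices $u$. You instead exchange the two summations first and package the whole double sum as the identity $H(x,y)=\tau(G)^{-1}\sum_{F}\vol(B_F)$ over spanning $2$-forests $F$ separating $y$ from $x$ (where $\vol(B_F)$ must be read as $\sum_{u\in B_F}d_u$ with $d_u$ the degree in $G$, not in the subgraph $B_F$), and only then apply the two elementary estimates $|\mathscr F(y;x)|\leq(n-1)\tau(G)$ and $\vol(B_F)\leq(n-1)d_{\max}$. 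The underlying counting is the same -- your surjection $(T,e)\mapsto T\setminus e$ is, after deleting the auxiliary edge $uy$, precisely the paper's map from $S$ onto $S'$, since spanning trees of $G'$ containing $uy$ correspond to $2$-forests of $G$ separating $u$ from $y$ -- so the two proofs bound the same quantity in the two possible orders and produce identical constants. What your route buys is the explicit $2$-forest formula for $H(x,y)$, which the paper never states, which makes the positivity of all terms in \eqref{eqhit} transparent, and which checks out (the contraction bijection you flag does go through: every edge of $G$ with both endpoints in $V(P)\cup\{y\}$ becomes a loop and hence never lies in $F$, so $F\cup E(P)$ has $n-2$ edges and is acyclic, and the inverse map recovers $P$ as the tree path from $x$ to $u$ in the non-$y$ component). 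The only caveat is notational: as written, $\vol(B_F)$ clashes with the paper's definition of the volume of a graph as the sum of its own degrees, so you should state explicitly that degrees are taken in $G$.
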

\begin{proof}
Fix $x,y,u\in V(G)$ with $y\neq u$.
Given a spanning tree $T\in \Omega(G)$ and an edge $e\in E(T)$, denote by $T(e)$ a subgraph of $G'$
obtained from $T$ by removing $e$ and adding an edge $uy$ if $uy\notin E(T)$, namely
\begin{equation*}
T(e)= \left\{ \begin{array}{ll}
        T & \mbox{if $uy\in T$},        \\
        \{T-e\}\cup\{uy\} & \mbox{if $uy\notin T$}.  \end{array}\right.
\end{equation*}

Define a subset $S$ of $\Omega(G)\times E(G)$ by
$$S=\{(T,e)\mid T\in\Omega(G),e\in E(T),T(e)\in\Omega(G')\}$$
and $S'=\{T\in\Omega(G')\mid T\mbox{ contains $uy$}\}$.
Then the map $(T,e)\rightarrow T(e)$ is a surjective map from $S$ to $S'$.
Therefore we have
\begin{align*}
\sum_{P\in \mathscr P_{G}(x,u)\atop y\notin P} \tau(G/\{P,y\})&=\#\{T\in\Omega(G')\mid T \mbox{ contains $uy$ and a path from $u$ to $x$} \}\\
&\leq |S'|\leq |S|\leq (n-1)\tau(G).
\end{align*}
Let $d_{\max}=\max\{d_v\mid v\in V(G)\}$. Then from \eqref{eqhit}, we have
\begin{equation*}
H(x,y)\leq d_{\max}(n-1)^2,
\end{equation*}
which implies \eqref{eqt5} and \eqref{eqt6}.
\end{proof}

\begin{remark}
An $O(n^3)$ upper bound for hitting times was first proved by Aleliunas et al. \cite{AKL}.
Inequalities \eqref{eqt5} and \eqref{eqt6} with slightly weaker bounds $n(n-1)^2$ and $kn(n-1)$ respectively were obtained by
Lawler \cite{Law}. A sharp bound of $H(x,y)$ with leading term $(4/27)n^3$ was obtained by Brightwell and Winkler \cite{BW}, who also showed
that lollipop graphs maximize $H(x,y)$.
\end{remark}

\begin{corollary}
Let $G$ be a connected graph with $m$ edges and $xy\in E(G)$. Then
\begin{equation}\label{eqt10}
H(x,y)\leq 2m-d_y.
\end{equation}
\end{corollary}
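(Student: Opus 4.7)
The plan is to revisit the combinatorial identity established inside the proof of Theorem~\ref{hittingtime2} and exploit the hypothesis $xy\in E(G)$ to sharpen it term by term. First, starting from \eqref{eqhit}, I would set
\[
N(x,y,u):=\sum_{P\in\mathscr{P}_G(x,u),\,y\notin P}\tau(G/\{P,y\})
\]
and recall from the proof of Theorem~\ref{hittingtime2} the reinterpretation
\[
N(x,y,u)=\#\{T\in\Omega(G')\mid uy\in T\text{ and }u,x\text{ lie in the same component of }T\setminus\{uy\}\},
\]
where $G'=G$ if $uy\in E(G)$ and $G'=G\cup\{uy\}$ otherwise. Since $N(x,y,y)=0$, only the terms with $u\neq y$ matter in \eqref{eqhit}.

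The heart of the argument is the uniform estimate $N(x,y,u)\le\tau(G)$ for each $u\ne y$. When $uy\in E(G)$, this is immediate because the set counted is a subset of $\Omega(G')=\Omega(G)$. The interesting case is $uy\notin E(G)$, where I would define the map
\[
\phi\colon T\longmapsto (T\setminus\{uy\})\cup\{xy\}
\]
and verify that it injects the set counted by $N(x,y,u)$ into $\Omega(G)$. Removing $uy$ splits $T$ into two components $A\ni u,x$ and $B\ni y$; since $xy\in E(G)$ connects $A$ to $B$, the image $\phi(T)$ is a spanning tree of $G$, provided $xy\notin T$. But $xy\in T$ would place $x$ in the same component as $y$ in $T\setminus\{uy\}$, contradicting $x\in A$. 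Invertibility is then clear from $T=(\phi(T)\setminus\{xy\})\cup\{uy\}$, so $\phi$ is injective.

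Combining these ingredients, \eqref{eqhit} gives
\[
H(x,y)=\frac{1}{\tau(G)}\sum_{u\ne y}d_u\,N(x,y,u)\le \sum_{u\ne y}d_u=2m-d_y,
\]
as desired. The main obstacle is the bookkeeping in the injection $\phi$—specifically, the observation that $xy\notin T$ is forced by the component condition, so adding $xy$ really produces a spanning tree rather than a subgraph with duplicated edge. This is precisely where the adjacency hypothesis $xy\in E(G)$ is used, and it is the single ingredient that upgrades the cruder estimate $N(x,y,u)\le(n-1)\tau(G)$ employed in the previous corollary to the sharper $N(x,y,u)\le\tau(G)$.
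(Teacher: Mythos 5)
Your proposal is correct and follows essentially the same route as the paper: both apply \eqref{eqhit} and reduce the claim to the key bound $\sum_{P\in \mathscr P_{G}(x,u),\, y\notin P}\tau(G/\{P,y\})\leq\tau(G)$ for each $u\neq y$, then sum the degrees to get $2m-d_y$. The only cosmetic difference is that the paper obtains this bound by identifying the sum with the set of spanning trees of $G$ containing the edge $xy$ (using the symmetry of undirected paths to swap the roles of $x$ and $u$ in the identity from Theorem~\ref{hittingtime2}), whereas you keep the $uy$-based identification and supply an explicit edge-swap injection into $\Omega(G)$ --- a correct and slightly more detailed verification of the same step.
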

\begin{proof}
For any $u\in V(G)$ with $u\neq y$, it is not difficult to see that
\begin{align*}
\sum_{P\in \mathscr P_{G}(x,u)\atop y\notin P} \tau(G/\{P,y\})&=\#\{T\in\Omega(G)\mid T \mbox{ contains $xy$ and a path from $u$ to $x$} \}\\
&\leq \tau(G).
\end{align*}
Thus \eqref{eqhit} implies that
$$H(x,y)\leq \frac{1}{\tau(G)}\sum_{u\in V(G)\atop u\neq y}d_u \tau(G)=2m-d_y,$$
as claimed.
\end{proof}

\begin{remark}
It is well-known (cf. \cite[p.8]{Lov}) that the commute time $\kappa(x,y):=H(x,y)+H(y,x)\leq 2m$ whenever $xy\in E(G)$. The inequality \eqref{eqt10} seems new and is sharp for the path graph,
where $x$ and $y$ are respectively the next-to-right endpoint and the right endpoint.

Let $\mathscr S=\{u\in V(G)\mid \mbox{There is a path from $x$ to $u$ not passing through $y$} \}$.
If $xy\in E(G)$ is a cut edge of $G$, then it was proved in \cite{AKL,Fei} that $\kappa(x,y)=2m$, or equivalently $H(x,y)=2|E(G')|-1$,
where $G'$ is the subgraph obtained by removing all vertices in $V(G)/\{\mathscr S\cup y\}$ from $G$.
The latter equality can also be proved easily using \eqref{eqhit}. First note that $H(x,y)$ is the same for random walks on either $G$ and $G'$.
Moreover, for each spanning tree $T$ of
$G'$ and $u\in\mathscr S$, there exists a path from $x$ to $u$. Therefore,
$$H(x,y)= \frac{1}{\tau(G')}\sum_{u\in\mathscr S}d_u \tau(G')=2|E(G')|-1,$$
where the last equation follows from the fact that the degree of $y$ in $G'$ is equal to $1$.

\end{remark}

\begin{corollary}\label{ht}
Let $G$ be a connected graph on $n$ vertices. If there is a vertex $y$ with degree $n-1$, then for any $x\in V(G)$ we have
\begin{equation}\label{eqt11}
H(x,y)\leq \max\{d_u\mid u\in \mathscr S \},
\end{equation}
 where $\mathscr S=\{u\in V(G)\mid \mbox{There is a path from $x$ to $u$ not passing through $y$} \}$.
\end{corollary}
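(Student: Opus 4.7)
The plan is to apply Theorem \ref{hittingtime2} and refine the combinatorial identity used in its proof. Writing \eqref{eqhit} as $H(x,y) = \tau(G)^{-1} \sum_{u \in V(G)} d_u M_u$ with
$$M_u := \sum_{P \in \mathscr P_G(x,u),\, y \notin P} \tau(G/\{P,y\}),$$
it suffices to show that $\sum_{u \in \mathscr S} M_u = \tau(G)$, because $M_u = 0$ whenever $u \notin \mathscr S$ (the indexing set of paths is empty) and then the uniform bound $d_u \le \max_{v \in \mathscr S} d_v$ delivers \eqref{eqt11} immediately.

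The key step is a tree-theoretic reinterpretation of $M_u$. Unwinding the combinatorial identification in the proof of Theorem \ref{hittingtime2} (the chain $F(x,y,u,v) = \eqref{eqt2} - \eqref{eqt3} + \eqref{eqt4} = F(x,y,u,x)$, applied at $v = x$), and using that $y$ is adjacent to every other vertex, so $G' = G$ uniformly in $u$, I expect to obtain
$$M_u = \#\{T \in \Omega(G) : uy \in T \text{ and the unique } u\text{-to-}x \text{ path in } T \text{ avoids } y\}.$$
Equivalently: removing $uy$ from such a $T$ splits it into a $u$-component and a $y$-component, and the $u$-to-$x$ path in $T$ avoids $y$ precisely when $x$ lies in the $u$-component, which in turn is equivalent to saying that the unique $x$-to-$y$ path in $T$ terminates with the edge $uy$.

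Granted this interpretation, for every spanning tree $T$ the $x$-to-$y$ path in $T$ ends at a unique edge $u^\ast y$, and $u^\ast \in \mathscr S$ since the initial portion of that path is a path from $x$ to $u^\ast$ in $G$ avoiding $y$. Hence $\Omega(G) = \bigsqcup_{u \in \mathscr S} \{T : u^\ast(T) = u\}$ is a disjoint partition, yielding $\sum_{u \in \mathscr S} M_u = \tau(G)$ and
$$H(x,y) = \frac{1}{\tau(G)} \sum_{u \in \mathscr S} d_u M_u \le \max_{u \in \mathscr S} d_u.$$
The only delicate point is correctly extracting the tree-theoretic meaning of $M_u$ from the proof of Theorem \ref{hittingtime2}; once that is carefully unpacked, the conclusion is a one-line pigeonhole.
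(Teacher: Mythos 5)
Your argument is correct and follows essentially the same route as the paper: both pass through Eq.~\eqref{eqhit}, reinterpret the inner sum $M_u$ as counting spanning trees containing $uy$ whose $x$--$u$ path avoids $y$, and compare $\sum_u M_u$ with $\tau(G)$ via the edge swap between $uy$ and $xy$ (your partition of $\Omega(G)$ by the last edge of the tree path from $x$ to $y$ is precisely the paper's injection $f:S\to\Omega(G)$, read as a bijection). The only difference is cosmetic: you obtain $\sum_{u\in\mathscr S}M_u=\tau(G)$ exactly where the paper settles for $\leq\tau(G)$, which does not affect the bound.
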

\begin{proof}
Fix any $x\in V(G)$ with $x\neq y$, we define
$\Omega_{xy}=\{T\in\Omega(G)\mid xy\in T\}$ and
$$V_T=\{u\in V(G)\mid \mbox{$T$ contains a path from $x$ to $u$ not passing through $y$} \}.$$
Let $S=\{(T,u)\mid T\in\Omega_{xy},u\in V_T\}$. Define a map $f:S\rightarrow\Omega(G)$ by
\begin{equation*}
f(T,u)= \left\{ \begin{array}{ll}
        T & \mbox{if $u=x$},        \\
        \{T-xy\}\cup\{uy\} & \mbox{if $u\neq x$},  \end{array}\right.
\end{equation*}
where we used the fact that $d_y=n-1$.
It is not difficult to see that $f$ is injective. Thus we have
\begin{equation*}
\sum_{u\in V(G)}\sum_{P\in \mathscr P_{G}(x,u)\atop y\notin P} \tau(G/\{P,y\})=|S|\leq \tau(G).
\end{equation*}
Therefore \eqref{eqhit} implies \eqref{eqt11}.
\end{proof}

\begin{remark} Without loss of generality, we may assume $\mathscr S=V(G)-\{y\}$ in the above corollary.
Eq. \eqref{eqt11} refines the result of Palacios \cite[Thm. 3.1]{Pal}, who proved $H(x,y)\leq n-1$ under the same
condition of the above corollary by using inequalities between matrix norms.
\end{remark}

\begin{remark}\label{rm2}
A simple probabilistic proof of Corollary \ref{ht} was provided by a referee:
Since the probability of moving to y is
at least $p = 1/\max\{d_u\mid u \in \mathscr S\}$ in every step, the expected hitting time is at most
the expected value of a geometric random variable with parameter $p$, which is $1/p$.
\end{remark}

\begin{remark}
Spanning trees have been extensively used to estimate hitting times \cite{AKL,CP,Fei}. Some of their
arguments are very technical. It shall be interesting to see how to
apply \eqref{eqhit} to recover their estimates of hitting times.
Explicit formulas of hitting times valid on general graphs are very rare.
As shown in \cite[\S 4]{XY}, Eq. \eqref{eqwalk} is very useful in studying hitting times on general graphs.
We will show in \S \ref{secexample} that Eq. \eqref{eqhit} is very efficient in getting closed formulas for hitting times
on graphs with few cycles.
\end{remark}

\vskip 30pt
\section{Random walks and electric networks}\label{secnetwork}

There has been a large amount of work on connections between electrical networks and random walks
on graphs. Chandra et al. \cite{CRR} proved that the commute time $\kappa(x,y)$ can
be expressed in terms of the effective resistance $\kappa(x,y)=\vol(G)R_{xy}$.
Tetali's electrical formula \cite{Tet}, expressing $H(x,y)$ in terms of the effective resistance,
was originally proved by using the reciprocity theorem of electrical networks. It was used to prove, among others,
closed formulas of hitting times for trees and unicycle graphs \cite{CZ}.
As an illustration of the effectiveness of \eqref{eqwalk}, we use it to prove Tetali's formula.
\begin{theorem}[Tetali \cite{Tet}] On a connected graph $G$,
\begin{equation}\label{eqtet2}
H(i,j)=\frac12\left(\kappa(i,j)+\sum_{q\in V(G)}\frac{d_q}{\vol(G)}[\kappa(q,j)-\kappa(q,i)]\right).
\end{equation}
\end{theorem}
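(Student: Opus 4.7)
Plan: I would prove Tetali's formula by combining Theorem \ref{hittingtime2} with the classical identity $\tau(G)R_{ab} = \#\{\text{spanning 2-forests of } G \text{ separating } a \text{ and } b\}$ (a consequence of the Matrix-Tree theorem applied to $\det L_{\{a,b\}}$). Writing \eqref{eqhit} as
$$H(x,y) = \frac{1}{\tau(G)}\sum_{u\in V(G)}d_u\, N(x,y,u), \qquad N(x,y,u) := \sum_{\substack{P\in\mathscr P_G(x,u)\\ y\notin P}}\tau(G/\{P,y\}),$$
the task reduces to a combinatorial evaluation of $N(x,y,u)$.

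By the extension of Kirchhoff's theorem to contractions (cf.\ Remark \ref{rm1}), $\tau(G/S)$ equals the number of spanning forests of $G$ with exactly $|S|$ components, one per vertex of $S$; hence each summand $\tau(G/\{P,y\})$ counts spanning forests $F$ of $G$ whose $|P|+1$ components are labelled by the vertices of $P\cup\{y\}$. I would then establish the bijection $(P,F)\leftrightarrow F' := F\cup E(P)$ between such pairs and spanning 2-forests $F'$ of $G$ in which $x,u$ lie in a common component $C_1$ and $y$ in the other component $C_2$. In one direction, adding the edges of $P$ successively to $F$ merges the $|P|$ components labelled by $V(P)$ into a single tree without creating cycles; in the other, $P$ is recovered as the unique $x$-to-$u$ path in the tree $C_1$, whose $|P|-1$ edges split $C_1$ back into $|P|$ subtrees each anchored at a distinct vertex of $P$. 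Consequently $N(x,y,u)$ equals the number of spanning 2-forests of $G$ with $\{x,u\}$ together and $y$ separated.

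Having made this identification, I would apply inclusion-exclusion: for distinct $x,u,y$ every spanning 2-forest falls into exactly one of three types according to which pair of $\{x,u,y\}$ is co-located, and the pairwise separation counts $\tau(G)R_{xy}$, $\tau(G)R_{uy}$, $\tau(G)R_{xu}$ give a $3\times 3$ linear system whose solution is
$$N(x,y,u) = \frac{1}{2}\tau(G)\bigl[R_{xy}+R_{uy}-R_{xu}\bigr],$$
which also handles the degenerate cases $u=x$ (using $R_{xx}=0$) and $u=y$ (both sides vanish). Substituting into \eqref{eqhit} produces the symmetric electrical form $H(x,y) = \frac{1}{2}\sum_{u}d_u\bigl[R_{xy}+R_{uy}-R_{xu}\bigr]$, and a routine rearrangement using $\kappa(a,b)=\vol(G)R_{ab}$ together with $\sum_u d_u = \vol(G)$ yields \eqref{eqtet2}. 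The main obstacle is the bijection step: one must check carefully that $E(P)$ is disjoint from $E(F)$ (so $F\cup E(P)$ is indeed a forest) and that deleting $E(P)$ from the tree $C_1$ produces exactly $|P|$ components, each anchored at a distinct vertex of $P$. Once this combinatorial core is secured, the effective-resistance algebra is entirely routine.
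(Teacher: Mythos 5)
Your proof is correct, but it takes a genuinely different route from the paper's. The paper proves \eqref{eqtet2} algebraically from the earlier formula \eqref{eqwalk}: it expresses $\vol(G)[H(i,j)-H(j,i)]$ and $\sum_q d_q[\kappa(q,j)-\kappa(q,i)]$ separately in terms of the invariants $Z(G-\{\cdot\},d_G)$, using Lemma \ref{Z} and a symmetry cancellation (swapping the roles of $q$ and $v$ in a triple sum over disjoint path pairs) to kill the cross terms; effective resistance never appears, and the argument is self-contained within the $R,Z$ framework. You instead start from \eqref{eqhit}, identify each inner sum $N(x,y,u)=\sum_{P}\tau(G/\{P,y\})$ with the number of spanning $2$-forests in which $x,u$ lie together and $y$ is separated (your bijection $(P,F)\leftrightarrow F\cup E(P)$ is sound: the edges of $P$ join distinct components of $F$ so no cycle arises, and $P$ is recovered as the unique tree path from $x$ to $u$), and then use the classical $2$-forest formula $\tau(G)R_{ab}=f(a|b)$ plus the three-vertex inclusion--exclusion to get $N(x,y,u)=\tfrac12\tau(G)[R_{xy}+R_{uy}-R_{xu}]$. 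This buys you the sharper resistance form \eqref{eqtet} directly, with a transparent combinatorial meaning for every term, and even re-derives the Chandra et al.\ identity $\kappa(x,y)=\vol(G)R_{xy}$ as a byproduct (sum your formula for $H(x,y)$ and $H(y,x)$), so the final passage to \eqref{eqtet2} needs no external input. The trade-off is that you must import the Matrix-Tree/$2$-forest characterization of effective resistance, which lies outside the paper's toolkit, whereas the authors' point in \S\ref{secnetwork} is precisely to exhibit \eqref{eqwalk} as sufficient on its own. Both arguments are complete; yours is arguably the more illuminating of the two, and in fact strengthens the conclusion.
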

\begin{proof}
By \eqref{eqwalk},
\begin{align}\label{eqt7}
\vol(G)[2H(i,j)-\kappa(j,i)]&=\vol(G)[H(i,j)-H(j,i)]\\ \nonumber
&=\frac{1}{\tau(G)}\bigg(Z(G-\{j\},d_G)-Z(G-\{i\},d_G)\bigg).
\end{align}
and by \eqref{eqwalk} and \eqref{eqZ2},
\begin{multline}\label{eqt8}
\sum_{q\in V(G)} d_q[H(q,j)+H(j,q)]\\
=\frac{1}{\vol(G)\tau(G)}\left(\sum_{q\in V(G)} d_q[Z(G-\{q\},d_G)+Z(G-\{j\},d_G)]
-2\sum_{q\in V(G)}d_q \sum_{P\in \mathscr P_{G}(q,j)}Z(G-\{P\}, d_G)\right.\\
\left.+2\sum_{q\in V(G)}d_q\sum_{u,v\in V(G)}\sum_{{{P_1 \in
\mathscr P_G (q,u)}\atop{P_2 \in \mathscr P_G(j,v)}}\atop
P_1\cap P_2=\emptyset}d_u d_v R(G-\{P_1, P_2\},
d_G)\right)\\
=\frac{1}{\vol(G)\tau(G)}\sum_{q\in V(G)} d_q[Z(G-\{q\},d_G)+Z(G-\{j\},d_G)]\\
-\frac{1}{\vol(G)\tau(G)}\left(2\sum_{q,u,v\in V(G)}\sum_{{{P_1 \in
\mathscr P_G (q,j)}\atop{P_2 \in \mathscr P_G(u,v)}}\atop
P_1\cap P_2=\emptyset}d_q d_u d_v R(G-\{P_1, P_2\},
d_G)\right.\\
\left.-2\sum_{q,u,v\in V(G)}\sum_{{{P_1 \in
\mathscr P_G (q,u)}\atop{P_2 \in \mathscr P_G(j,v)}}\atop
P_1\cap P_2=\emptyset}d_q d_u d_v R(G-\{P_1, P_2\},
d_G)\right)\\
=\frac{1}{\vol(G)\tau(G)}\sum_{q\in V(G)} d_q[Z(G-\{q\},d_G)+Z(G-\{j\},d_G)].
\end{multline}
The vanishing of the bracket term in the last equation can be seen by switching $q$ and $v$.
By \eqref{eqt8}, we get
\begin{multline}\label{eqt9}
\sum_{q\in V(G)}d_q [\kappa(q,j)-\kappa(q,i)]=\sum_{q\in V(G)} d_q[H(q,j)+H(j,q)-H(q,i)-H(i,q)]\\
=\frac{1}{\vol(G)\tau(G)}\sum_{q\in V(G)} d_q\bigg(Z(G-\{j\},d_G)-Z(G-\{i\},d_G)\bigg),
\end{multline}
which, together with \eqref{eqt7}, implies Tetali's formula \eqref{eqtet2}.
\end{proof}

Tetali's formula \eqref{eqtet2} also gives an expression of hitting times in terms
of numbers of spanning trees via the following equation (cf. \cite{Lov})
\begin{equation}
\kappa(x,y)=\vol(G)\frac{\tau(G')}{\tau(G)},
\end{equation}
where $x\neq y\in V(G)$ and $G'$ is the
graph obtained from $G$ by identifying $x$ and $y$. In contrast to Tetali's formula, the formula \eqref{eqhit}
does not have negative terms and thus is more efficient for bounding hitting times.

\vskip 30pt
\section{Two examples}\label{secexample}

The weight function $w$ may be written as a sequence
$[w_1,\dots,w_n]$ with $n=|V(G)|$ once we specify a natural labeling of $V(G)$.
The following lemmas will be used in Example \ref{ex1}.
\begin{lemma}[{\cite[\S 3]{XY}}] \label{R1}
Let $P_n$ and $K_n$ be the path and the complete graph on $n$ vertices respectively. Then
\begin{align}
R(P_n, [2^n])&=n+1,\\
k_{n,m}:=R(K_n, [m^n])&=(m-n+1)(m+1)^{n-1}. \label{eqcomplete}
\end{align}
\end{lemma}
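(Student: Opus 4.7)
My plan is to deduce both identities from Remark \ref{rm1}, which rewrites $R(G,w)$ as the spanning-tree count $\tau(\overline G)$ of the completion multigraph on $V(G)\cup\{\bullet\}$. This reduces each case to a standard counting problem, and no further machinery from the paper is needed.

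For $R(P_n,[2^n])$: the two endpoints of $P_n$ have degree $1$ while the $n-2$ interior vertices already have degree $2$, so $\overline{P_n}$ adds exactly one edge from $\bullet$ to each endpoint and no edges elsewhere. The resulting graph is the cycle $C_{n+1}$ on $n+1$ vertices, and $\tau(C_{n+1})=n+1$ since each spanning tree is obtained by deleting exactly one of its $n+1$ edges.

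For $R(K_n,[m^n])$: every vertex of $K_n$ has degree $n-1$, so $\overline{K_n}$ is the multigraph obtained from $K_n$ by adding $m-n+1$ parallel edges from $\bullet$ to each vertex. I would evaluate $\tau(\overline{K_n})$ by the matrix-tree theorem, deleting the row and column of $\bullet$ from the Laplacian. A short bookkeeping check shows that the reduced Laplacian has constant diagonal $m$ (the total degree of a $K_n$-vertex in $\overline{K_n}$) and constant off-diagonal $-1$, so it equals $(m+1)I_n - J_n$ with $J_n$ the all-ones matrix. Its determinant then follows from the spectrum $\{n,0,\dots,0\}$ of $J_n$, yielding eigenvalues $m-n+1$ (simple) and $m+1$ (multiplicity $n-1$), hence $(m-n+1)(m+1)^{n-1}$.

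The approach presents no real obstacle: beyond invoking Remark \ref{rm1}, the $P_n$ case is a one-line cycle count and the $K_n$ case is a standard spectral computation. The one mild subtlety is that the completion-graph interpretation formally requires $m\ge n-1$; for smaller $m$ the identity \eqref{eqcomplete} still holds by the polynomial character of the recursion \eqref{eqR}, and if one prefers to avoid this point entirely, both formulas can instead be obtained by direct induction from \eqref{eqR}.
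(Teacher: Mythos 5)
Your proof is correct, but it takes a different route from the source: the paper itself offers no proof of this lemma (it is quoted from \cite[\S 3]{XY}), and the computations there --- like the proof of Lemma \ref{R2} in this paper --- proceed by applying the recursion \eqref{eqR} directly, e.g.\ $R(P_n,[2^n])=2R(P_{n-1},[2^{n-1}])-R(P_{n-2},[2^{n-2}])$ for the path, and for $K_n$ a sum over simple paths counted by $\binom{n-2}{i}\,i!$ as in Example \ref{ex1}. Your argument instead passes through Remark \ref{rm1}, identifying $R(P_n,[2^n])$ with $\tau(C_{n+1})=n+1$ and $R(K_n,[m^n])$ with $\det\bigl((m+1)I_n-J_n\bigr)=(m-n+1)(m+1)^{n-1}$ via the matrix-tree theorem; both computations check out (note that the $K_n$ formula correctly gives $0$ at $m=n-1$, consistent with $R(K_n,d_{K_n})=0$ from Lemma \ref{RZ}). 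The recursive route is self-contained and needs no positivity assumption on $w_v-d_v$; your route is shorter and more conceptual, and the caveat you raise about $m\ge n-1$ is harmless here since the paper's standing assumption $d_x\le w_x$ (and every instance of $k_{n,m}$ actually used, e.g.\ in Example \ref{ex1}) satisfies it, with the polynomiality of $R$ in the weights covering the remaining cases as you say.
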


\begin{lemma}\label{R2}
Let $L_{m,n}$ be a lollipop graph obtained by attaching a path $P_n$ to $K_m$.
\begin{equation*}
\xymatrix@C=8mm{ *+++[o][F-]{\scriptstyle K_m} \ar@{-}[r]_<<{x_m} &\underset{y_1}{\bullet} \ar@{-}[r] & \cdots \ar@{-}[r] & \underset{y_n}{\bullet}}
\end{equation*}
More precisely $V(G)=\{x_1,\dots,x_m,y_1,\dots,y_n\}$
and $(x_i,x_j)\in E(G)$ for $1\leq i<j\leq m$; $(x_m,y_1)\in E(G)$;
$(y_i,y_i+1)\in E(G)$ for $1\leq i<n$. Define a weight function $D_k$ on
$L_{m,n}$ by
\begin{equation*}
 D_k(v) = \left\{ \begin{array}{ll}
        d_v+k & \mbox{if $v=x_j,\,1\leq j\leq m$},  \\
        2 & \mbox{if $v=y_j,\,1\leq j\leq n$.}
        \end{array}\right.
\end{equation*}
Then $r_{m,n,k}:=R(L_{m,n}, D_k)$ is equal to
\begin{equation*}
r_{m,n,k}=((m+k)(n+1)-n)(k+1)(m+k)^{m-2}-(m-1)(m+k)^{m-2}(n+1).
\end{equation*}
\end{lemma}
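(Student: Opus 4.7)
The plan is to count spanning trees directly in the completion multigraph $\overline{L_{m,n}}$ (Remark~\ref{rm1}), using the ``spine'' path that runs from $x_m$ through the attached $P_n$ to the auxiliary vertex $\bullet$. Since $D_k(x_j)-d_{x_j}=k$ for every $j$, $D_k(y_n)-d_{y_n}=1$, and $D_k(y_i)-d_{y_i}=0$ for $i<n$, the multigraph $\overline{L_{m,n}}$ consists of $L_{m,n}$ together with a vertex $\bullet$ joined by $k$ parallel edges to each $x_j$ and by a single edge to $y_n$. Let
\[
\Pi:\ x_m - y_1 - y_2 - \cdots - y_n - \bullet
\]
denote the resulting path of length $n+1$. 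The key structural observation is that every $y_i$ has \emph{all} of its incident edges on $\Pi$; hence, if two edges of $\Pi$ were absent from a spanning tree $T$, the vertices strictly between them would form a component disconnected from the rest. Consequently $T$ must contain either all $n+1$ edges of $\Pi$ or exactly $n$ of them, and I count the two cases.

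\emph{Case A (all $n+1$ edges present).} The remaining $m-1$ edges of $T$ form a spanning tree of the multigraph obtained from $\overline{L_{m,n}}$ by contracting $\Pi$ to a single vertex $z$. A careful accounting of edges shows that between $z$ and each $x_j$ with $j<m$ there are $k+1$ parallel edges (the $K_m$-edge $x_m x_j$ together with $k$ copies of $\bullet x_j$), while each pair $x_i,x_j$ with $i,j<m$ is joined by a single edge; the self-loops at $z$ play no role for spanning-tree counts. This is precisely the completion multigraph of $(K_{m-1},[(m+k-1)^{m-1}])$, so by Remark~\ref{rm1} and Lemma~\ref{R1} its tree count equals $k_{m-1,m+k-1}=(k+1)(m+k)^{m-2}$.

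\emph{Case B (exactly one edge of $\Pi$ missing).} The $m$ remaining edges of $T$ then span the induced multigraph on $\{x_1,\dots,x_m,\bullet\}$, which is the completion multigraph of $(K_m,[(m+k-1)^m])$. By Lemma~\ref{R1} its tree count is $k_{m,m+k-1}=k(m+k)^{m-1}$. Summing over the $n+1$ choices of which edge of $\Pi$ to omit contributes $(n+1)\,k(m+k)^{m-1}$ in total.

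Adding the two contributions gives $r_{m,n,k}=(k+1)(m+k)^{m-2}+(n+1)\,k(m+k)^{m-1}$, and a routine algebraic rearrangement shows this coincides with the claimed closed form. The point requiring the most care is the edge bookkeeping in Case A: the contraction must be shown to fuse each $K_m$-edge $x_m x_j$ with the corresponding bundle of $k$ parallel $\bullet x_j$ edges into a single bundle of size $k+1$, which is precisely why the contracted graph turns out to be a completion of $K_{m-1}$ rather than $K_m$.
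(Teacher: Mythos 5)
Your proof is correct, but it takes a genuinely different route from the paper's. The paper proves Lemma \ref{R2} by applying the defining recursion \eqref{eqR} at the cut vertex $x=x_m$: the weighted term $w_{x_m}R(L_{m,n}-\{x_m\},D_k)$ factors as $R(K_{m-1},\cdot)\,R(P_n,\cdot)$, and the path-deletion sums over $\mathscr P_G(x_m,y)$ split according to whether $y$ lies on the pendant path or in $K_m$, producing the binomial sums $\sum_i\binom{m-2}{i}i!\,R(K_{m-2-i},\cdot)$ that are then evaluated via Lemma \ref{R1}. You instead invoke Remark \ref{rm1} to identify $R(L_{m,n},D_k)$ with $\tau(\overline{L_{m,n}})$ and count spanning trees directly by conditioning on how many edges of the spine $\Pi$ survive; your key observations (every $y_i$ has all incident edges on $\Pi$, so at most one edge of $\Pi$ can be missing; the contraction in Case A fuses each $x_mx_j$ with the $k$ parallel $\bullet x_j$ edges to give the completion of $(K_{m-1},[(m+k-1)^{m-1}])$; Case B reduces to the completion of $(K_m,[(m+k-1)^m])$) are all sound, and your total $(k+1)(m+k)^{m-2}+(n+1)k(m+k)^{m-1}$ does agree with the stated closed form (I checked the algebra, and the formula also verifies numerically for small $m,n,k$). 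Your approach buys a cleaner decomposition with no alternating sums and a more transparent final expression, at the cost of leaning on the combinatorial interpretation of $R$ rather than its recursive definition; the paper's approach stays entirely within the $R$-calculus of \cite{XY} and reuses the same machinery it deploys in Example \ref{ex1}. The one step you wave at --- ``a routine algebraic rearrangement'' --- is indeed routine, but for a self-contained write-up you should display it.
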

\begin{proof}
Taking $x=x_m$ in \eqref{eqR}, we have
\begin{multline}
R(G_{m,n},D_k)=(m+k)R(K_{m-1},[(m-1+k)^{m-1}])\cdot
R(P_n,[2^n])\\ -R(K_{m-1},[(m-1+k)^{m-1}]\cdot
R(P_{n-1},[2^{n-1}])\\-(m-1)\sum^{m-2}_{i=0}\binom{m-2}{i}i!R(K_{m-2-i},(m-1+k)^{m-2-i})R(P_n,[2^n])\\
=((m+k)(n+1)-n)(k+1)(m+k)^{m-2}-(m-1)(m+k)^{m-2}(n+1),
\end{multline}
as claimed.
\end{proof}

\begin{example}\label{ex1}
Let $G$ be a lollipop graph $L_{N,N}$ with $N\geq2$.
\begin{equation*}
\xymatrix@C=8mm{ *+++[o][F-]{\scriptstyle K_N} \ar@{-}[r]_<<{x_N} &\underset{y_1}{\bullet} \ar@{-}[r] & \cdots \ar@{-}[r] & \underset{y_N}{\bullet}}
\end{equation*}
By \eqref{eqhit}, we have
\begin{multline*}
H(x_1,y_N)=\frac{1}{N^{N-2}}\left((N-1)r_{N-1, N-1,
1}+(N-1)(N-2)\sum^{N-3}_{i=0}\binom{N-3}{i}i!r_{N-2-i, N-1, i+2}\right.\\
+(N-1)(N-2)\sum^{N-3}_{i=0}\binom{N-3}{i}(i+1)!R(K_{N-(i+3)},[(N-1)^{N-(i+3)}])R(P_{N-1},[2^{N-1}])\\
+N\sum^{N-2}_{i=0}\binom{N-2}{i}i!R(K_{N-(i+2)},[(N-1)^{N-(i+2)}])R(P_{N-1},[2^{N-1}])\\
\left.+2\sum^{N-2}_{i=0}\binom{N-2}{i}i!R(K_{N-(i+2)},[(N-1)^{N-(i+2)}])\sum^{N-1}_{j=1}R(P_{N-1-j},[2^{N-1-j}]\right).
\end{multline*}
The five terms in the bracket respectively correspond to (i) $u=x_1$; (ii) $u=x_j,2\leq j\leq N-1$ and $x_N\notin P\in\mathscr P_G(x_1,u)$;
(iii) $u=x_j,2\leq j\leq N-1$ and $x_N\in P\in\mathscr P_G(x_1,u)$; (iv) $u=x_N$; (v) $u=y_j,1\leq j\leq N-1$.

By Lemmas \ref{R1} and \ref{R2}, it is not difficult to get
\begin{multline}
H(x_1,y_N)=\frac{1}{N^{N-2}}\Bigg((N-1)r_{N-1, N-1,
1}\\
+(N-1)(N-2)\sum^{N-3}_{i=0}\frac{N-3}{(N-3-i)!}[r_{N-2-i, N-1, i+2}
+(i+1)k_{N-(i+3),(N-1)}\cdot
N]\\
+\sum^{N-2}_{i=0}\frac{(N-2)!}{(N-2-i)!}\bigg[k_{N-(i+2),(N-1)}\cdot N^2+2
k_{N-(i+2),N-1}\sum^{N-1}_{j=1}(N-j)\bigg]\Bigg)\\
=N^3+N-1.
\end{multline}
\end{example}

The above example was discussed in \cite{Law,Pal} by different approaches.

By using Tetali's electrical formula, Chen and Zhang \cite{CZ}
obtained an explicit formula for hitting times of random walks on unicycle graphs.
In the next example, we apply Theorem \ref{hittingtime2} to give
a more direct derivation of Chen-Zhang's formula.
\begin{example}
We follow the notations of \cite{CZ}.
Let $G$
be a connected unicyclic graph with a unique cycle $C$ of length $l$.
Let $V(C) = \{1, 2,\dots, l\}$ and $T_i,\,1\leq i\leq l$ the tree
component of $G\backslash E(C)$ containing $i$. Denote $m_i =|E(T_i)|$.
Given $i,j,k\in V(C)$, denote by $P_{ijk}$ the path from $i$ to
$k$ containing $j$. Let $m_{ij}$ and $m_{jk}$ be the lengths of subpaths of $P_{ijk}$ from $i$ to $j$ and $j$ to $k$
respectively.

First we assume that there are two distinct vertices $i,j\in V(C)$ such that
$a\in V(T_i), b\in V(T_j)$. Let $G_0$ be the subgraph of $G$ induced by $V(P_{ai})\cup V(P_{jb})\cup V(C)$ and $m_v$
the number of edges in the component of $G\backslash E(G_0)$ containing $v$. Define a map $f:V(G)\rightarrow V(G_0)$
by setting $f(u)=v$ if $u\in V(G)$
belongs to the component of $G\backslash E(G_0)$ containing $v\in V(G_0)$.

Considering the three cases $u\in f^{-1}(P_{ai})$, $u\in f^{-1}(P_{jb})$ and $u\in f^{-1}(V(C)\backslash\{i,j\})$ in \eqref{eqhit}, we get
\begin{multline*}
H(a,b)=\frac{1}{l}\sum_{u\in V(G)}d_u \sum_{P \in \mathscr P_G (a,u)\atop y\notin P}\tau(G/\{P,b\})\\
=\sum_{v\in P_{ai}}2m_v\left(d(v,i)+d(j,b)+\frac{d(i,j)(l-d(i,j))}{l}\right)
+\sum_{v\in P_{jb}}2m_v d(v,b)\\
+\sum_{k\in V(C)\backslash\{i,j\}}(2m_k+2)\left(d(j,b)+\frac{m_{ij}m_{jk}}{l}\right)\\
=2\sum_{v\in P_{ai}}m_v\left(d(v,i)+d(j,b)+\frac{d(i,j)(l-d(i,j))}{l}\right)
+2\sum_{v\in P_{jb}}m_v d(v,b)\\
+2\sum_{k\in V(C)\backslash\{i,j\}}m_k\left(d(j,b)+\frac{m_{ij}m_{jk}}{l}\right)\\
+d(a,i)^2+d(j,b)^2+2(l+d(a,i))d(j,b)+\frac{l+2d(a,i)}{l}d(i,j)(l-d(i,j)).
\end{multline*}

If $a,b\in V(T_i)$ for some $i$, then it is not difficult to show that
$$H(a,b)=d(a,b)^2+2\sum_{v\in V(P)}m_v d(v,b).$$

\end{example}

$$ \ \ \ \ $$

\end{document}